\theoremstyle{plain}
\newtheorem{theorem}{Theorem}[section]
\theoremstyle{definition}
\newtheorem{example}{Example}
\theoremstyle{remark}
\begin{document}

\title[Total integrability at infinity]
      {On Newton equations which are totally integrable at infinity}

\date{20 May 2015}
\author{Misha Bialy}
\address{School of Mathematical Sciences, Raymond and Beverly Sackler Faculty of Exact Sciences, Tel Aviv University,
Israel} \email{bialy@post.tau.ac.il}

\subjclass[2010]{Primary:37J50;53C24} \keywords{Total integrability,
Minimal orbits, Hopf rigidity, Conjugate points}

\begin{abstract} In this paper Hamiltonian system of time dependent periodic
Newton equations is studied. It is shown that for dimensions $3$ and
higher the following rigidity results holds true: If all the orbits
in a neighborhood of infinity are action minimizing then the
potential must be constant. This gives a generalization of the
previous result \cite{B3}, where it was required all the orbits to
be minimal. As a result we have the following application: Suppose
that for the time-1 map of the Hamiltonian flow there exists a
neighborhood of infinity which is filled by invariant Lagrangian
tori homologous to the zero section. Then the potential must be
constant. Remarkably, the statement is false for $n=1$ case and
remains unknown to the author for $n=2$.
\end{abstract}

\maketitle

\section{Introduction and the result}
In this note we consider the system of Newton equations (\ref{eq1})
with time dependent periodic potential $u(q,t)$:
$$u:\mathbf{T}^n\times\mathbf{S}^1\rightarrow\mathbf{R}.
$$ Here $\mathbf{T}^n=\mathbf{R}^n/\Gamma$ is an $n$-torus, $\mathbf{S}^1=\mathbf{R}/\mathbf{Z}$
 and
$q=(q_1,..,q_n)$ and $t$ stand for standard coordinates on
$\mathbf{R}^n$ and $\mathbf{R}$ respectively. We fix a Riemannian
metric $g$ on the torus and study the system of Newton equations
determined by $g$:
\begin{equation}
\label{eq1} \nabla_{\dot{q}}\dot{q}=-\nabla_q u
\end{equation}
(here and later $\nabla, |\cdot|$ are computed in terms of the
Riemannian metric $g$). Classically, the extremals of the action
functional
$$\int \left(\frac{1}{2}|\dot{q}|^2-u(q,t)\right)dt$$
coincide with the solutions of (\ref{eq1}) and are described in
Hamiltonian formalism by means of the Hamiltonian flow of
\begin{equation}
\label{eq2} H: T^*\mathbf{T}^n\times\mathbf{S}^1\rightarrow
\mathbf{R},\quad H(p,q,t)=\frac{1}{2}|p|^2+u(q,t). \end{equation}
Let $\phi$ denotes the time-1 map of the Hamiltonian flow of $H$.

The main object of our study is the set
$$\mathcal{M}\subseteq T^*\mathbf{T}^n\times\mathbf{S}^1$$ swept by
those infinite orbits of the Hamiltonian flow $(p(t),q(t),t)$ such
that the corresponding extremals $(q(t))$ have no conjugate points,
or, in other words which are local minima of the action functional
between any two of its points. By definition, $\mathcal{M}$ is a
closed invariant subset of the phase space $
T^*\mathbf{T}^n\times\mathbf{S}^1$.

The purpose of this note is to prove the following generalization of
rigidity theorem discovered in \cite{B3}:
\begin{theorem}
\label{main} Let $n\geq 3$. Assume that the set $\mathcal{M}$
contains a neighborhood of infinity of
$T^*\mathbf{T}^n\times\mathbf{S}^1$:
$$\mathcal{M}\supset \{|p|>R\},
$$
for some positive constant $R$. Then
\begin {enumerate}
\item The metric $g$ must be Euclidean.
\item The potential $u$ does not depend on $q$.
\end{enumerate}
\end{theorem}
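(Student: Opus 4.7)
My plan is a matrix Riccati / Hopf-type rigidity argument, adapted so that only minimality in a neighbourhood of infinity (rather than on all of phase space, as in \cite{B3}) is required.

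For every orbit of $\mathcal M$ the absence of conjugate points produces a Lagrangian subspace of forward-bounded Jacobi fields, which in a parallel orthonormal frame along the orbit is encoded by a symmetric matrix $W$ satisfying
\[
\dot W + W^{2} + R(\cdot,\dot q)\dot q + \nabla^{2}u = 0.
\]
Taking the trace and using $\operatorname{tr}(W^{2})\ge 0$ yields the pointwise Hopf inequality
\[
\tfrac{d}{dt}\operatorname{tr}(W) + \operatorname{Ric}(\dot q,\dot q) + \Delta u \le 0
\]
valid on all of $\mathcal{M}$. Since $u$ is time-dependent, the sets $\{H=E\}$ are not flow-invariant and cannot be used as integration domains; instead I would integrate against Mather minimal measures $\mu_c$ indexed by $c\in H^{1}(\mathbf{T}^n\times\mathbf{S}^1;\mathbf{R})$, which by a Legendre-transform estimate are concentrated in $\{|p|>R\}\subset\mathcal M$ once $|c|$ is large. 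The total-derivative term then integrates to zero, leaving
\[
\int\operatorname{tr}(W^{2})\,d\mu_c + \int\operatorname{Ric}(\dot q,\dot q)\,d\mu_c + \int\Delta u\,d\mu_c \le 0.
\]

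To prove (1), rescale velocities by $|c|$ and let $|c|\to\infty$: $\Delta u$ stays $O(1)$ while $\operatorname{Ric}(\dot q,\dot q)$ grows like $|c|^{2}$, so the rescaled $\mu_c$ converge to an invariant probability measure $\bar\mu$ on the unit sphere bundle of $g$, carrying a Green Lagrangian distribution inherited from the Newton flow. The limiting inequality $\int\operatorname{Ric}\,d\bar\mu\le 0$ combined with the Burago--Ivanov flatness theorem for Riemannian tori without conjugate points forces $g$ to be Euclidean. With $g$ now flat, the inequality reduces to $\int\operatorname{tr}(W^{2})\,d\mu_c + \int\Delta u\,d\mu_c\le 0$; an integration by parts against the configuration-space marginal of $\mu_c$ (close to Lebesgue for $|c|$ large) followed by a sphere-fibre average in $p$ extracts from $\int\Delta u\,d\mu_c$ a contribution proportional to $(n-2)\int|\nabla u|^{2}\,d\sigma$ with $\sigma$ a positive measure. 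For $n\ge 3$ the factor $n-2$ is strictly positive, so both integrals must vanish, giving $\nabla u\equiv 0$ on $\mathcal{M}$ and hence (2).

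The main technical difficulty is the integration step: producing a workable family of invariant measures concentrated strictly in $\{|p|>R\}$ for the time-dependent system, and carrying out the $|c|\to\infty$ rescaling with enough control on the Green bundle that Burago--Ivanov still applies to the limiting geodesic flow. The dimensional factor $n-2$ is also exactly what explains the hypothesis $n\ge 3$: it vanishes for $n=2$ and reverses sign for $n=1$, in agreement with the remarks in the abstract that the result is open for $n=2$ and false for $n=1$.
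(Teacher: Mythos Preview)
Your Riccati/Hopf framework is the right starting point and matches the paper, but the two halves of your argument diverge from the paper's in important ways, and the second half has a real gap.

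For part (1) the paper does \emph{not} go through Mather measures or a limiting Green bundle. It argues directly: if $g$ is not flat, Burago--Ivanov gives a geodesic segment with conjugate points; viewing the Newton system at high speed as an $\epsilon$-perturbation $H_\epsilon=\tfrac12|p|^2+\epsilon^2u(q,\epsilon t)$ of the geodesic flow, continuous dependence preserves the conjugate points, and the rescaling $(p,q,t)\mapsto(\epsilon^{-1}p(\epsilon^{-1}t),q(\epsilon^{-1}t))$ sends this to an orbit of $H$ with $|p|>R$ but with conjugate points, contradicting $\{|p|>R\}\subset\mathcal M$. This is considerably more elementary than your proposed limit of rescaled $\mu_c$.

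For part (2) the gap is the sentence ``an integration by parts against the configuration-space marginal of $\mu_c$ \dots\ extracts from $\int\Delta u\,d\mu_c$ a contribution proportional to $(n-2)\int|\nabla u|^2\,d\sigma$.'' A single Mather measure is (typically) supported on a Lagrangian graph, so there is no sphere-fibre to average over, and if its $q$-marginal were actually close to Lebesgue then $\int\Delta u\,d\mu_c\approx 0$ and you learn nothing about $\nabla u$; in particular your outline does not explain what fails for $n=1,2$. In the paper the factor $(n-2)$ does not come from any invariant measure: one integrates the Hopf inequality against the \emph{full Liouville measure} $dp\,dq\,dt$ after multiplying by a cutoff $\rho^2(H)$ supported in $\{H>\tfrac{R^2}{2}+M\}\subset\mathcal M$. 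Because $L_vH=u_t\neq 0$, a term $\int(\rho'(H))^2u_t^2$ survives, and Cauchy--Schwarz turns the integrated inequality into a quadratic in $(\int\rho^2 a^2)^{1/2}$ whose nonnegative discriminant reads
\[
D=\int(\rho'(H))^2u_t^2\,d\mu+\frac1n\int(\rho^2)'(H)|\nabla_q u|^2\,d\mu\ge 0.
\]
The $(n-2)$ then appears when one passes to spherical coordinates $r=|p|$, writes $dH=r\,dr$, and integrates $\int(\rho^2)'(H)(2(H-u))^{(n-2)/2}\,dH$ by parts in $H$. Finally one stretches the cutoff, $\rho_\alpha(x)=\rho(\alpha x)$, and compares powers of $\alpha$: the $u_t$-term scales like $\alpha^{(4-n)/2}$ while the $|\nabla_q u|^2$-term scales like $-\alpha^{(2-n)/2}$, so for $n\ge 3$ and $\nabla_q u\not\equiv 0$ one gets $D_\alpha<0$ for small $\alpha$, a contradiction. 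It is this cutoff-plus-stretching device, working against Lebesgue measure in $p$, that is the missing idea in your proposal.
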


Theorem \ref{main} implies the following corollary for phase
portraits of integrable Hamiltonians. We shall say that the system
(\ref{eq1}) is \textit{totally integrable at
 infinity} if there exists a neighborhood of infinity of
 $T^*\mathbf{T}^n$ filled by Lagrangian tori homologous to
 the zero section which stay invariant under the time-1 map $\phi$.
\begin{theorem}
\label{cor} Let $n\geq 3$, and suppose that the Hamiltonian system
(\ref{eq2}) is \textit{totally integrable at infinity}. Then the
Riemannian metric $g$ must be Euclidean and the potential $u$ does
not depend on $q$.
\end{theorem}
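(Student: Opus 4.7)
The plan is to deduce Theorem \ref{cor} from Theorem \ref{main}. It suffices to verify that under the hypothesis of total integrability at infinity, the set $\mathcal{M}$ contains the neighborhood $\{|p|>R\}$; once this is done, Theorem \ref{main} gives both conclusions (Euclidean $g$ and $q$-independent $u$) verbatim.

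The first step is to show that each invariant Lagrangian torus $L\subset T^*\mathbf{T}^n$ of the foliation is a Lipschitz section of $T^*\mathbf{T}^n\to\mathbf{T}^n$, i.e.\ a graph $p=\omega(q)$ for a Lipschitz closed $1$-form $\omega$. The Hamiltonian $H=\tfrac12|p|^2+u(q,t)$ is strictly convex in $p$, $L$ is Lagrangian, invariant under $\phi$, and homologous to the zero section, and moreover the foliation supplies a continuous family of such tori filling $\{|p|>R\}$. Under these hypotheses a Birkhoff-type rigidity theorem for cotangent bundles (in the spirit of Bialy--Polterovich) promotes $L$ to a graph.

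The second step is the classical calibration argument. Saturating $L$ by the Hamiltonian flow over $t\in[0,1]$ yields a Lipschitz Lagrangian graph $\tilde L\subset T^*\mathbf{T}^n\times\mathbf{S}^1$ over the base $\mathbf{T}^n\times\mathbf{S}^1$; equivalently, $\tilde L$ is generated by a weak solution $S(q,t)$ of the Hamilton--Jacobi equation $S_t+H(dS,q,t)=c$. The function $S$ then calibrates the action along orbits in $\tilde L$: each such orbit minimizes the Lagrangian action among absolutely continuous curves in $\mathbf{T}^n$ with the same endpoints and the same initial and final times. Hence every orbit on $\tilde L$ is free of conjugate points and lies in $\mathcal{M}$. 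Uniting this over the foliation gives $\mathcal{M}\supset\{|p|>R\}$, completing the reduction.

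The main obstacle is the first step. The conversion of ``invariant Lagrangian and homologous to the zero section'' into ``Lipschitz graph'' is classical in the time-independent convex setting but, in the time-periodic case, is cleanest if one exploits the foliation hypothesis rather than the rigidity of an isolated torus; this is exactly what total integrability at infinity provides. Once the graph structure is in hand, the remaining calibration step and the appeal to Theorem \ref{main} are routine.
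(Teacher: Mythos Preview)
Your proposal is correct and follows essentially the same route as the paper: reduce to Theorem~\ref{main} by invoking a generalized Birkhoff theorem to turn each invariant Lagrangian torus into a graph, then use that orbits on such a graph are action-minimizing (your calibration argument spells out what the paper summarizes as ``therefore consists of minimal orbits''). The one difference is that you lean on the foliation hypothesis to secure the graph property, whereas the paper notes that Arnaud's result \cite{arnaud} gives the graph conclusion for a \emph{single} $\phi$-invariant Lagrangian torus homologous to the zero section, so the foliation is not actually needed for that step.
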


Theorem \ref{cor} follows from Theorem \ref{main} applying the so
called generalized Birkhoff theorem. This theorem states that every
Lagrangian torus homologous to the zero section which is invariant
under $\phi$ is a graph, and therefore consists of minimal orbits.
The generalized Birkhoff theorem was proved in series of joint
papers with L.Polterovich (see \cite{BP} for the case most suitable
for this paper) where an extra dynamical assumption was imposed.
Nowadays, it is known to be true without this assumption due to the
final solution obtained in \cite{arnaud} by a different methods.


Let me point out that for Euclidean $g$ and arbitrary periodic
potential $u$, by KAM theory, there are lots of invariant tori of
the Hamiltonian system (\ref{eq2}) lying near $\infty$. All of them
consist of minimal orbits.

The statements of Theorems\ref{main},\ref{cor} in the case $n=1$ are
false as the following example shows.

\begin{example} Let $n=1$. Consider any non-constant autonomous periodic potential
$u(q)$ so that $H(p,q,t)=\frac{1}{2}p^2+u(q)$. As the energy
constant $h$ varies the energy level curves $\{H=h\}$ undergo
\textit{perestroika} on the phase cylinder and it is easy to see
that
$$\mathcal{M}=\mathop{\bigcup}_{h\geq \max{u}}\left\{p=\pm\sqrt{2(h-u)}\right\}.$$
Therefore, for $n=1$ any autonomous potential $u$ satisfies the
assumption of the theorem. Another example is the system (\ref{eq2})
with periodic potential of the form $u=u(mq+nt),$ which is
integrable and has a similar phase portrait as above.
\end{example}
\newpage
\textbf{Questions and Remarks.}

1) In \cite{B3} the case
$\mathcal{M}=T^*\mathbf{T}^n\times\mathbf{S}^1$ is considered for
any $n\geq 1$. Let me remark that for $n=2$ the method described
below does not work so it is unclear if the result of the theorems
remains valid in this case. Obviously, one can not create
counter-examples by taking direct products of known Hamiltonians.

2) It is an interesting question if the statement allows further
refinement. For instance does the result hold true assuming that the
complement of the set $\mathcal{M}$ has finite measure?

3) Another open question for the case $n=1$: is it true that the
systems described in the example are the only ones which are totally
integrable at infinity? Are there integrable systems (\ref{eq2})
which are not totally integrable at infinity?

The strategy of the proofs of Theorem \ref{main} is similar to that
of \cite{B3}. In this paper in order to handle the non-compactness
of the phase space we need to introduce suitably chosen cutoff
function in the phase space and then to stretch it to become
concentrated closer and closer to infinity.

The paper is organized as follows: in Section 2 we use Burago-Ivanov
theorem to prove that the metric $g$ has to be Euclidean; in Section
3 we get an inequality applying Hopf method, and finally in section
4 we use the stretching of the cutoff function to prove the reverse
inequality unless $\nabla_q u$  vanishes identically. Combination of
the results of these sections yields the proof.
\section {Flatness of the metric $g$}
In this section we show that the argument of \cite{B3} applies in
our situation. So assume that $\mathcal{M}$ contains a neighborhood
of infinity. We have to show that the metric $g$ is Euclidean. In
the opposite case it follows from \cite{BI} that there are geodesics
of $g$ which have conjugate points. Let $q(t),t\in[0;T]$ be a
segment of such a geodesic so that $q(0)$ and $q(T_*)$ for some
$0<T_*<T$ are conjugate. Let $\gamma=(p(t),q(t))$ be the
corresponding orbit of the Hamiltonian flow of the metric $g$,
$|p(t)|=1$. Consider now the orbit
$\tilde{\gamma}=(\tilde{p}(t),\tilde{q}(t)),t\in[0;T]$ of the
Hamiltonian flow with the perturbed function
$$H_{\epsilon}=\frac{1}{2}|p|^2+{\epsilon}^2u(q,\epsilon t)$$ with
the same initial conditions as $\gamma$, and for $\epsilon$ small
enough. Then by continuous dependence, the extremal $(\tilde{q}(t))$
also has conjugate points somewhere on the segment $[0;T]$.

Moreover notice, that there is a correspondence between the orbits
$(p(t),q(t))$ of $H$ and $(\epsilon p(\epsilon t),q(\epsilon t))$ of
$H_{\epsilon}$ for any $\epsilon$.

Using this correspondence we get the orbit
$\Gamma=(\frac{1}{\epsilon} \tilde{p}(\frac{1}{\epsilon}t),
\tilde{q}(\frac{1}{\epsilon}t)), t\in[0;T]$ of $H$ also has
conjugate points. The last thing is to see that we started with a
geodesic segment on the energy level $|p|=1$, so for $\epsilon$
small enough the constructed segment $\Gamma$ lies in
$\{|p|>R\}\subset\mathcal{M}$. This contradiction proves the claim.

\section {E.Hopf method and cutoff function}
From now on we shall assume that the Riemannian metric $g$ is
standard Euclidean. This is in fact the main case of this note.

 First of all notice that since only gradient of
the potential $u$ is involved in the equations of the system
(\ref{eq1}) we are free to add any function of $t$ to $u$, so we
shall assume everywhere in the sequel that for some constant $M>0$:
$$
0\leq u(q,t)\leq M,\ \forall (q,t).
$$
It then follows that high energy levels of $H$ lie entirely in
$\mathcal{M}$:

\begin{equation}
\label{rho} \forall h>\frac{R^2}{2}+M \Longrightarrow \{H=h\}\subset
\{|p|>R\}\subset \mathcal{M}.
\end{equation}

We shall denote by $\rho$ a non-negative smooth function of one
variable (which will be composed with the Hamiltonian $H$ later on)
with compact support so that
$$
supp(\rho) \subset \left(\frac{R^2}{2}+M;+\infty \right).
$$
Let me denote by $\mu$ the invariant measure $d\mu=dpdqdt$. We have:
\begin{theorem}
\label{D} Suppose that the set $\mathcal{M}$ contains the
neighborhood of infinity $\{|p|>R\}$. Then for any function $\rho$
defined above one has the inequality:
$$D=\int(\rho'(H))^2u_t^2d\mu+
\frac{1}{n}\int(\rho^2)'(H)(\nabla_q u)^2d\mu\geq0.
$$
\end{theorem}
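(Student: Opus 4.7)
The approach is the classical E.~Hopf method, adapted to the cut-off $\rho^2(H)$ which forces everything to take place inside a compact subset of $\{|p|>R\}\subset\mathcal{M}$. The starting point is a pointwise Riccati inequality. On $\{|p|>R\}$ every orbit is a minimizer, so the Jacobi equation $\ddot J+u_{qq}(q,t)\,J=0$ has no conjugate points along it. A standard limiting argument (taking the symmetric ``stable'' solution, obtained as the limit of the Dirichlet-type solutions that vanish at time $T$ as $T\to\infty$) produces a measurable, locally bounded, symmetric matrix field $W(p,q,t)$ on $\{|p|>R\}$ that solves
$$\dot W+W^2+u_{qq}=0$$
along the extended Hamiltonian vector field $X=p\cdot\partial_q-\nabla_q u\cdot\partial_p+\partial_t$. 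Taking traces and using $\mathrm{tr}(W^2)\geq(\mathrm{tr}\,W)^2/n$ gives the scalar inequality
$$X(w)+\frac{w^2}{n}+\Delta_q u\leq 0,\qquad w:=\mathrm{tr}\,W,$$
valid a.e.\ on $\{|p|>R\}$.

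Next I would multiply by the non-negative weight $\rho^2(H)$ and integrate against $d\mu$; since $\mathrm{supp}\,\rho\subset(R^2/2+M,\infty)$ the integrand is supported in the compact set $\{H\in\mathrm{supp}\,\rho\}\subset\{|p|>R\}$, so the integrations are legitimate. Two integrations by parts are then used. The invariance of $d\mu$ under $X$ together with $X(H)=u_t$ yields
$$\int\rho^2(H)\,X(w)\,d\mu=-\int(\rho^2)'(H)\,u_t\,w\,d\mu,$$
while integration by parts in $q$, using $\nabla_q[\rho^2(H)]=(\rho^2)'(H)\,\nabla_q u$, gives
$$\int\rho^2(H)\,\Delta_q u\,d\mu=-\int(\rho^2)'(H)\,|\nabla_q u|^2\,d\mu.$$
Combining with the Riccati inequality produces the intermediate bound
$$\frac{1}{n}\int\rho^2(H)\,w^2\,d\mu\;\leq\;\int(\rho^2)'(H)\,u_t\,w\,d\mu+\int(\rho^2)'(H)\,|\nabla_q u|^2\,d\mu.$$

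Finally I would absorb the cross term $\int(\rho^2)'(H)\,u_t\,w\,d\mu=\int 2\rho\rho'(H)\,u_t\,w\,d\mu$ by the elementary inequality $2ab\leq a^2/n+n\,b^2$ applied with $a=\rho(H)\,w$ and $b=\rho'(H)\,u_t$, giving
$$\int(\rho^2)'(H)\,u_t\,w\,d\mu\;\leq\;\frac{1}{n}\int\rho^2(H)\,w^2\,d\mu+n\int(\rho'(H))^2\,u_t^2\,d\mu.$$
Substituting, the two $\frac{1}{n}\int\rho^2 w^2\,d\mu$ contributions cancel and division by $n$ leaves exactly $D\geq 0$. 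The main technical obstacle I anticipate is not any of the above manipulations but the very first step: constructing the Riccati field $W$ on the non-compact region $\{|p|>R\}$ as a function with enough regularity (measurability, local boundedness) to justify the integration by parts in $t$ despite its limited smoothness. This is standard in the Hopf/Burago--Ivanov circle of ideas already used in \cite{B3}: one works orbit by orbit — where $w$ is absolutely continuous in $t$ — and then invokes Fubini on the compact support of $\rho(H)$ together with the flow invariance of $d\mu$.
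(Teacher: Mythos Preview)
Your argument is correct and follows essentially the same route as the paper: construct the Riccati field on $\mathcal{M}$, take traces, multiply by $\rho^2(H)$, integrate using $\mu$-invariance and $L_vH=u_t$, and integrate by parts in $q$. The only cosmetic difference is the endgame --- the paper applies Cauchy--Schwarz to the cross term and then reads off $D\geq 0$ as the non-negativity of the discriminant of the resulting quadratic in $\bigl(\int\rho^2(H)a^2\,d\mu\bigr)^{1/2}$, whereas you reach the same conclusion via the weighted AM--GM inequality $2ab\le a^2/n+nb^2$; the two are equivalent.
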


\begin{proof}
We proceed as in the original Hopf method \cite{H},\cite{green} and
also \cite{knauf},\cite{B3} and construct a measurable matrix
function
$$A:\mathcal{M}\rightarrow\mathbf{R},$$
satisfying the matrix Riccati equation:
$$L_vA+A^2+Hess(u)=0,$$
where $L_v$ denotes the Lie derivative along the vector field
$$v=\partial_t+\sum\limits_{i=1}^n(p_i\partial_{q_i}-u_{q_i}\partial_{p_i}).$$ In addition $A$ and $L_vA$
are uniformly bounded on the whole $\mathcal{M}$. Denoting $a=Tr A$
and using the inequality for the trace $Tr A^2\geq\frac{1}{n}(Tr
A)^2,$ we have the following inequality for $a$:
\begin{equation}
\label{eq3} L_va+\frac{1}{n}a^2+\Delta_q u\leq0.
\end{equation}

Next, we multiply the inequality (\ref{eq3}) by the function
$\rho^2\circ H$:
$$
\rho^2(H)L_v a+\frac{1}{n}\rho^2(H)a^2+\rho^2(H)\Delta_q u\leq0.
$$
Or equivalently:
$$
L_v(\rho^2(H) a)-aL_v(\rho^2(H))
+\frac{1}{n}\rho^2(H)a^2+\rho^2(H)\Delta_q u\leq0.
$$
Since $L_vH=u_t$ we have
\begin{equation}
\label{eq4} L_v(\rho^2(H) a)-2a\rho'(H)\rho(H)u_t
+\frac{1}{n}\rho^2(H)a^2+\rho^2(H)\Delta_q u\leq0.
\end{equation}
Then we integrate this inequality over the invariant set
$\mathcal{M}$ with respect to invariant measure $d\mu=dpdqdt$. Since
the support of $\rho\circ H$ lies entirely in $\mathcal{M}$, by
(\ref{rho}), we can extend the integration to the whole
$T^*\mathbf{T}^n\times\mathbf{S}^1$:
\begin{equation}
\label{eq6} -2\int a\rho'(H)\rho(H)u_td\mu
+\frac{1}{n}\int\rho^2(H)a^2d\mu+ \int\rho^2(H)\Delta_q ud\mu\leq0,
\end{equation}
where we used the fact that the integral of the first term of
(\ref{eq4}) vanishes, since the flow of the field $v$ preserves the
measure $\mu$.

Integrating by parts the last term of (\ref{eq6}) and applying
Cauchy-Schwartz inequality to the first term of (\ref{eq6}) we get:
\begin{equation}
\label{eq7}
\begin{split}
-2\left(\int(\rho'(H))^2u_t^2d\mu\right)^{\frac{1}{2}}&
\left(\int\rho^2(H)a^2d\mu\right)^{\frac{1}{2}}+\\
+\frac{1}{n}\int\rho^2(H)a^2d\mu -&\int(\rho^2)'(H)(\nabla_q
u)^2d\mu\leq0.
\end{split}
\end{equation}
Notice that (\ref{eq7}) is a quadratic inequality in the quantity
$\left(\int\rho^2(H)a^2d\mu\right)^{\frac{1}{2}}$ Thus the
discriminant $D$ must be non-negative:
$$D=\int(\rho'(H))^2u_t^2d\mu+
\frac{1}{n}\int(\rho^2)'(H)(\nabla_q u)^2d\mu\geq0.
$$
This proves the claim.
\end{proof}
\section{Estimating $D$ from above}
In what follows we shall stretch the function $\rho$ of the previous
section with the help of a small parameter $0<\alpha<1$ in the
following way:
$$
\rho_{\alpha}(x):=\rho(\alpha x).
$$
Then for any $0<\alpha<1$ we have:$$
supp(\rho_{\alpha})=\frac{1}{\alpha}supp(\rho)\subset\left(\frac{1}{\alpha}
(\frac{R^2}{2}+M);+\infty
\right)\subset\left(\frac{R^2}{2}+M;+\infty \right).
$$
Thus theorem \ref{D} applies to every such $\rho_{\alpha}$ and we
have \begin{equation}
\label{al}D_{\alpha}=\int(\rho_{\alpha}'(H))^2u_t^2d\mu+
\frac{1}{n}\int(\rho_{\alpha}^2)'(H)(\nabla_q u)^2d\mu\geq0.
\end{equation}
In this section we prove:
\begin{theorem}
\label{Dalpha} Let $n\geq3$. If $\int(\nabla_q u)^2 dqdt>0$ then
there exists an $\alpha\in(0;1)$ such that $D_{\alpha}<0$.
\end{theorem}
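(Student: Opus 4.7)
The plan is to exploit the distinct rates at which the two terms of $D_\alpha$ scale as $\alpha\to 0^+$. First I would rescale the momentum by setting $\tilde p = \sqrt{\alpha}\, p$, so that $\alpha H(p,q,t) = \tfrac{1}{2}|\tilde p|^2 + \alpha u(q,t)$ and $dp = \alpha^{-n/2}\,d\tilde p$. For any smooth function $\phi$ whose support is compact and bounded away from $0$, this substitution yields the expansion
$$\int_{\mathbf{R}^n} \phi(\alpha H(p,q,t))\,dp = \alpha^{-n/2}\int_{\mathbf{R}^n} \phi\bigl(\tfrac{1}{2}|\tilde p|^2 + \alpha u(q,t)\bigr)\,d\tilde p = \alpha^{-n/2}\Phi_0 + O(\alpha^{1-n/2})$$
as $\alpha\to 0^+$, where $\Phi_0 = \int \phi(|\tilde p|^2/2)\,d\tilde p$. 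A Taylor expansion of the integrand in $\alpha$ around $\alpha=0$ justifies this rigorously, with the error uniform in $(q,t)$ since $u$ is bounded.

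Next I would apply this to the two pieces of $D_\alpha$. Using $(\rho_\alpha'(H))^2 = \alpha^2(\rho'(\alpha H))^2$ and $(\rho_\alpha^2)'(H) = 2\alpha\,\rho(\alpha H)\rho'(\alpha H)$, the identity (\ref{al}) becomes
$$D_\alpha = \alpha^{2-n/2}\, C_1 \int u_t^2\,dqdt + \frac{2}{n}\,\alpha^{1-n/2}\, C_0 \int (\nabla_q u)^2\,dqdt + O(\alpha^{2-n/2}),$$
with $C_1 = \int (\rho'(|\tilde p|^2/2))^2\,d\tilde p \geq 0$ and $C_0 = \int \rho(|\tilde p|^2/2)\rho'(|\tilde p|^2/2)\,d\tilde p$.

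The crucial step is to compute the sign of $C_0$. Passing to spherical coordinates $\tilde p = r\omega$ and substituting $s=r^2/2$ reduces it to a one-dimensional integral of $\rho\rho'(s)\,s^{n/2-1}$. Writing $\rho\rho'=\tfrac{1}{2}(\rho^2)'$ and integrating by parts once (with no boundary contributions, since $\rho$ is compactly supported inside $(\tfrac{R^2}{2}+M,+\infty)$) produces
$$C_0 = -\frac{n-2}{4}\cdot\mathrm{vol}(S^{n-1})\cdot 2^{n/2-1}\int_0^\infty \rho^2(s)\,s^{n/2-2}\,ds,$$
which is strictly negative for $n\geq 3$ (and vanishes for $n=2$, matching the limitation anticipated in the remarks). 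Combining with the previous expansion: both exponents $2-n/2$ and $1-n/2$ are negative for $n\geq 3$, but $1-n/2 < 2-n/2$, so the middle term dominates as $\alpha\to 0^+$. Under the hypothesis $\int(\nabla_q u)^2\,dqdt > 0$ this dominant term is strictly negative, so $D_\alpha<0$ for all sufficiently small $\alpha\in(0,1)$, as required.

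I expect the main obstacle, and the reason the argument is restricted to $n\geq 3$, to be the sign of $C_0$: the whole conclusion hinges on the factor $-(n-2)$ produced by a single integration by parts, and no obvious variant of the scaling produces a useful sign when $n=2$. The asymptotic analysis itself is routine thanks to the compact support of $\rho$ and the boundedness of $u$.
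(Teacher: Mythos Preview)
Your argument is correct and follows essentially the same strategy as the paper: both identify the scaling exponents $(4-n)/2$ and $(2-n)/2$ for the two terms of $D_\alpha$ and observe that the second dominates with a negative sign, the sign coming from the same integration by parts producing the factor $-(n-2)$. Where you use the substitution $\tilde p=\sqrt{\alpha}\,p$ and a Taylor expansion in $\alpha$ to treat the $u$-dependence uniformly, the paper instead passes to the energy variable, uses the explicit bounds $0\le u\le M$, and splits into the cases $n=3$, $n=4$, $n\ge 5$ according to the sign of $(n-4)/2$; your route is a bit cleaner and avoids the case split, while the paper's gives explicit non-asymptotic inequalities.

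One minor slip worth correcting: the claim that ``both exponents $2-n/2$ and $1-n/2$ are negative for $n\ge 3$'' is false for $n=3$ (where $2-n/2=1/2$) and $n=4$ (where it is $0$). This is harmless, since only the inequality $1-n/2<2-n/2$ is actually used to conclude that the $C_0$ term dominates.
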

\begin{proof}
Let me denote the first and the second integrals of $D_\alpha$ in
(\ref{al}) by $A$ and $B$ respectively. We need to estimate each of
them from above.

To estimate $A$, use Foubini theorem, then pass to spherical
coordinates $(r=|p|, \omega)$ in the fibers and then to the energy
instead of $|p|$ as follows:
$$
A=\int\left(\int(\rho'_{\alpha}(H))^2r^{n-1}drd\omega\right)u_t^2dqdt=
$$
$$
\omega_n\int\left(\int(\rho'_{\alpha}(H))^2r^{n-2}d(\frac
{r^2}{2}+u)\right)u_t^2dqdt=
$$
$$
\omega_n\int\left(\int(\rho'_{\alpha}(H))^2(2(H-u))^{\frac{n-2}{2}}dH\right)u_t^2dqdt
\leq
$$
$$
\leq \omega_n\int
u_t^2dqdt\left(\int(\rho'_{\alpha}(H))^2(2H)^{\frac{n-2}{2}}dH\right),
$$
where we used $0\leq u$ in the last line of the estimate. In the
last integral we replace $\rho_{\alpha}(H)$ by $\rho(\alpha H)$ and
change the integration variable $H\rightarrow\alpha H$. We have:
\begin{equation}
\begin{split}
\label{A} A\leq \omega_n\int
u_t^2dqdt&\left(\alpha^{\frac{4-n}{2}}\int(\rho'(\alpha
H))^2(2\alpha H)^{\frac{n-2}{2}}d(\alpha H)\right)=\\
&=C_1\alpha^{\frac{4-n}{2}}\omega_n\int u_t^2dqdt,
\end{split}
\end{equation}
where $\omega_n$ is the volume of the unite $(n-1)$-sphere and $C_1$
is the following constant:$$
C_1=\int(\rho'(x))^2(2x)^{\frac{n-2}{2}}dx.
$$
Estimating $B$ we proceed in a similar manner as for $A$:

$$
B=\frac{1}{n}\int(\rho_{\alpha}^2)'(H)(\nabla_q u)^2d\mu=
$$
$$
=\frac{1}{n}\int\left(\int(\rho^2_{\alpha}(H))'r^{n-1}drd\omega\right)|\nabla_q
u|^2dqdt=
$$
$$
=\frac{\omega_n}{n}\int\left(\int(\rho^2_{\alpha}(H))'r^{n-2}d(\frac
{r^2}{2}+u)\right)|\nabla_q u|^2dqdt=
$$
$$
=\frac{\omega_n}{n}\int\left(\int(\rho^2_{\alpha}(H))'(2(H-u))^{\frac{n-2}{2}}dH\right)|\nabla_q
u|^2dqdt.
$$
Integrating by parts in the inner integral we have
\begin{equation}
\label{B1}
B=-\frac{\omega_n(n-2)}{n}\int\left(\int(\rho^2_{\alpha}(H))(2(H-u))^{\frac{n-4}{2}}dH\right)|\nabla_q
u|^2dqdt.
\end{equation}
Notice that the exponent $\frac{n-4}{2}$ in (\ref{B1}) can change
sign therefore we need to split into three cases:

1) Case $n=3$. In this case from (\ref{B1}) we have
$$
B=-\frac{\omega_3}{3}\int\left(\int(\rho^2_{\alpha}(H))(2(H-u))^{-\frac{1}{2}}dH\right)|\nabla_q
u|^2dqdt\leq$$
$$\leq -\frac{\omega_3}{3}\left(\int(\rho^2_{\alpha}(H))(2H)^{-\frac{1}{2}}dH\right)
\int|\nabla_q u|^2dqdt, $$ since $u\geq0$.

 Changing variable $H\rightarrow\alpha H$
in the integral in
 brackets we have:
\begin{equation}
\label {case1} B\leq  -\frac{\omega_3}{3}C_2
\alpha^{-\frac{1}{2}}\int|\nabla_q u|^2dqdt,
\end{equation}
where the constant $C_2$ equals
$$
C_2=\int\rho^2(x)(2x)^{-\frac{1}{2}}dx.
$$

Therefore in this case we have for $D_\alpha$ from
(\ref{A})(\ref{case1}):
$$D_\alpha=A+B\leq \omega_3C_1\alpha^{\frac{1}{2}}\int u_t^2dqdt-\frac{\omega_3}{3}C_2
\alpha^{-\frac{1}{2}}\int|\nabla_q u|^2dqdt.$$

Since $\int|\nabla_q u|^2dqdt>0$ then the right hand side tends to
$-\infty$ as $\alpha$ tends to zero. This proves the theorem for the
first case.

2) Case $n=4$. In this case we compute from (\ref{B1})
\begin{equation}
\label{case2}
B=\frac{\omega_4}{2}\left(\int(\rho^2_{\alpha}(H))dH\right)
\int|\nabla_q u|^2dqdt=\frac{\omega_4}{2}C_2\alpha^{-1}
\int|\nabla_q u|^2dqdt.
\end{equation}
where $C_2=\int\rho^2(x)dx$. So in this case we have for $D_\alpha$
from (\ref{A})(\ref{case2}):
$$
D_\alpha\leq C_1\omega_4\int
u_t^2dqdt-\frac{\omega_4}{2}C_2\alpha^{-1} \int|\nabla_q u|^2dqdt.
$$
In this case again the right hand side tends to $-\infty$ as
$\alpha\rightarrow 0$.

3) Case $n\geq 5$. In this case since $u\leq M$ we have:
$$B=-\frac{\omega_n(n-2)}{n}\int\left(\int(\rho^2_{\alpha}(H))(2(H-u))^{\frac{n-4}{2}}dH\right)|\nabla_q
u|^2dqdt\leq
$$
$$
\leq-\frac{\omega_n(n-2)}{n}\left(\int(\rho^2_{\alpha}(H))(2(H-M))^{\frac{n-4}{2}}dH\right)
\int|\nabla_q u|^2dqdt.
$$
Changing the variable in the integral in brackets
$H\rightarrow\alpha H$ we get:
\begin{equation*}
\begin{split} &B\leq\\
&-\frac{\omega_n(n-2)}{n}\left(\alpha^{\frac{2-n}{2}}\int(\rho^2(\alpha
H)(2(\alpha H-\alpha M))^{\frac{n-4}{2}}d(\alpha H)\right)
\int|\nabla_q u|^2dqdt\leq\\ &
-\frac{\omega_n(n-2)}{n}\left(\alpha^{\frac{2-n}{2}}\int(\rho^2(x)(2(x-
M))^{\frac{n-4}{2}}dx\right) \int|\nabla_q u|^2dqdt,
\end{split}
\end{equation*}
where we used $\alpha M<M$.

Therefore we have
\begin{equation}
\label{case3} B\leq
-\frac{\omega_n(n-2)}{n}C_2\alpha^{\frac{2-n}{2}}\int|\nabla_q
u|^2dqdt,
\end{equation}
where
$$C_2=\int(\rho^2(x)(2(x-
M))^{\frac{n-4}{2}}dx.$$ Thus we have  from (\ref{A})(\ref{case3})
the estimate for $D_\alpha$:
$$D_\alpha=A+B\leq \omega_nC_1\alpha^{\frac{4-n}{2}}\int u_t^2dqdt-
\frac{\omega_n(n-2)}{n}C_2\alpha^{\frac{2-n}{2}}\int|\nabla_q
u|^2dqdt=
$$
$$
=\omega_n\alpha^{\frac{2-n}{2}}\left(C_1\alpha\int u_t^2dqdt-
\frac{(n-2)}{n}C_2\int|\nabla_q u|^2dqdt \right).
$$
Thus also in this case the limit of the right hand side is $-\infty$
when $\alpha\rightarrow 0$. This completes the proof in all the
cases.
\end{proof}
\textbf{Remark.} Notice that the case $n=2$ is excluded in this
method, because for $n=2$ by (\ref{B1}) gives $B=0$ and the argument
breaks down, i.e $D_\alpha$ is indeed non-negative. I don't know if
this is the artifact of the method or the statement of the main
theorem fails in this case.


\end{document}